
\documentclass{amsart}
%%%%%%%%%%%%%%%%%%%%%%%%%%%%%%%%%%%%%%%%%%%%%%%%%%%%%%%%%%%%%%%%%%%%%%%%%%%%%%%%%%%%%%%%%%%%%%%%%%%%%%%%%%%%%%%%%%%%%%%%%%%%%%%%%%%%%%%%%%%%%%%%%%%%%%%%%%%%%%%%%%%%%%%%%%%%%%%%%%%%%%%%%%%%%%%%%%%%%%%%%%%%%%%%%%%%%%%%%%%%%%%%%%%%%%%%%%%%%%%%%%%%%%%%%%%%
\usepackage{amssymb}
\usepackage{amsfonts}
\usepackage{amsmath}
\usepackage{amsthm}

\setcounter{MaxMatrixCols}{10}

\theoremstyle{plain}
\newtheorem{THM}{Theorem}
\newtheorem{COR}[THM]{Corollary}
\newtheorem{theorem}{Theorem}[section]

\newtheorem{cor}[theorem]{Corollary}
\newtheorem{prop}[theorem]{Proposition}
\newtheorem{lemma}[theorem]{Lemma}

\theoremstyle{definition}

\newcommand{\rank}{\mathrm{rank}}
\newcommand{\IW}{\mathrm{I}_{W} }

\begin{document}
\title[Integrable forms and varieties of minimal degree]{Germs of integrable
forms and varieties of minimal degree}
\author{Jorge Vit\'{o}rio Pereira}
\address{IMPA\\
Estrada Dona Castorina, 110\\
Rio de Janeiro / Brasil 22460-320}
\email{jvp@impa.br}
\author{Carlo Perrone}
\address{IMPA\\
Estrada Dona Castorina, 110\\
Rio de Janeiro / Brasil 22460-320}
\email{perrone@impa.br}
\date\today
\subjclass{}

\begin{abstract}
We study the subvariety of  integrable $1$-forms in a finite dimensional
vector space $W \subset \Omega^1(\mathbb C^n,0)$. We prove that the irreducible components
with dimension comparable with the rank of $W$ are of minimal degree.
%
%\bigskip
%
%\noindent{\sc Resum\'{e}.}
%On \'{e}tudie la sous-vari\'{e}t\'{e} form\'{e}e par les $1$-forms dans un espace vetoriel de dimension finie
%$W \subset \Omega^1(\mathbb C^n,0)$. On d\'{e}montre que les composantes irreductibles de cette sous-vari\'{e}t\'{e} dont
%la dimension est comparable au rang de $W$ sont de degr\'{e} minimal.
\end{abstract}

\maketitle

\section{Introduction}
Let $(\mathbb{C}^{n},0)$ be the germ of $\mathbb{C}^{n}$  at the origin.
For $q\in\{0,...,n\}$,  $\Omega ^{q}(\mathbb{C}^{n},0)$ will stand for the space of germs  of
holomorphic $q$-differential forms at $0  \in \mathbb{C}^{n}$.

In this work we are interested in describing the intersection of the set of integrable $1$-forms
in $\Omega^1(\mathbb C^n,0)$ with a finite dimensional vector space  $W \subset \Omega^1(\mathbb C^n,0)$.
In more concrete terms, our main objects of study are the projective varieties
\begin{equation*}
\mathrm{I}_{W} = \big\{ \, [\omega ]\in \mathbb{P}(W) \, \vert \, \omega \wedge d\omega =0 \, \big\}
\end{equation*}
where $W$ is as above and $\mathbb P(W)$ is the space of complex lines in $W$.

\medskip

Our motivation steams from the  study of the  irreducible components
of the space of foliations on $\mathbb P^n$, see    \cite{Cukierman-Pereira} and references therein. In the existing literature the usual approach to study the space of foliations on $\mathbb P^n$
passes through the recognition of distinguishing features of some classes of foliations,
and the proof of the stability of these features under small deformations. In this note, instead of looking at the foliations we focus directly on
the defining equations of $\mathrm{I}_{W}$.
For that sake we make  use of a simple idea  presented in \cite{Bouetou-Dufour}
reminiscent of Steiner's construction of rational normal curves, see Section \ref{S:steiner}.

\medskip

In order to state our main results we need first to introduce the {\bf rank} of a finite vector space $W \subset \Omega^1(\mathbb C^n,0)$. By definition,
$\rank(W)$  is the greatest integer $r$ for which the natural map
\[
   \bigwedge^r W \longrightarrow \Omega^r (\mathbb C^n,0)
\]
is not the zero map. Notice that $\rank(W) \le \min( \dim W, n)$.

\begin{THM}\label{T:1}
Let $W \subset \Omega^1(\mathbb C^n,0)$ be a finite dimensional vector space and let $\Sigma$ be an irreducible component of $\IW$.
If the codimension of $\Sigma$ in $\mathbb P( W)$ is at most $\rank(W)- 2$ then $\Sigma$ is a variety of minimal degree.
\end{THM}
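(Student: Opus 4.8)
The plan is to realize $\IW$ as the base locus of a linear system of quadrics and then to exploit the classification of varieties of minimal degree. First I would fix a basis $\omega_1,\dots,\omega_k$ of $W$ and write a general element as $\omega=\sum_i x_i\omega_i$, so that
\[
\omega\wedge d\omega=\sum_{i,j}x_ix_j\,\omega_i\wedge d\omega_j .
\]
Pairing with linear functionals on the finite dimensional subspace of $\Omega^3(\mathbb C^n,0)$ spanned by the $\omega_i\wedge d\omega_j$ exhibits $\IW$ as the common zero locus in $\mathbb P(W)$ of an explicit system of quadrics; equivalently it is governed by the symmetric $\Omega^3$-valued bilinear form $B(\alpha,\beta)=\tfrac12(\alpha\wedge d\beta+\beta\wedge d\alpha)$, with $\IW=\{[\omega]\colon B(\omega,\omega)=0\}$. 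The first payoff of this description is the classical principle that a scheme cut out by quadrics satisfies, for every $[\omega]\in\IW$, that $\IW\cap\mathbb T_{[\omega]}\IW$ is a union of lines through $[\omega]$ contained in $\IW$: if $B(\omega,\omega)=B(\omega,\omega')=B(\omega',\omega')=0$ then the whole pencil $s\omega+t\omega'$ lies in $\IW$. In our setting such a line is exactly a pencil of integrable forms.

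The second ingredient is a geometric reading of the rank. Writing $\omega_i=\sum_j a_{ij}\,dx_j$, a wedge of $r$ forms of $W$ is a nonzero $r$-form precisely when they are generically independent as covectors, so $\rank(W)=r$ equals the generic rank of the coefficient matrix $(a_{ij})$, and at a general point of $(\mathbb C^n,0)$ the forms of $W$ span an $r$-dimensional space of covectors. I would use this to manufacture lines, and hence linear spaces, inside $\IW$ through a general point $[\omega]$ of the component $\Sigma$. The basic source of such lines is that any function multiple $f\omega$ lying in $W$ is again integrable and conjugate to $\omega$, since $f\omega\wedge d(f\omega)=f^2\,\omega\wedge d\omega=0$ and $B(\omega,f\omega)=0$; more refined pencils come from sub- and super-foliations of the foliation defined by $\omega$, using the Saito--de Rham division $d\omega=\eta\wedge\omega$ valid at a general integrable $\omega$. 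The goal of this step is to show that through a general point $\Sigma$ carries a linear space whose dimension, controlled by $r$, is large enough that $\Sigma$ is swept out by linear spaces as a scroll-type variety; the hypothesis $\operatorname{codim}_{\mathbb P(W)}\Sigma\le r-2$ is precisely what calibrates these rulings against $\dim\Sigma$.

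With this structure in hand I would conclude by invoking the Bertini--del Pezzo classification of varieties of minimal degree (quadrics, the Veronese surface and its cones, and rational normal scrolls), or equivalently by establishing $2$-regularity and appealing to the theorem of Eisenbud--Green--Hulek--Popescu that an irreducible $2$-regular variety has minimal degree. The hypersurface case is immediate and fixes the numerology: an irreducible hypersurface component of an intersection of quadrics has reduced equation dividing each quadric, hence is a hyperplane or a single quadric, so $\deg=\operatorname{codim}+1$. For higher codimension, the line structure of the first paragraph together with the covering by linear spaces of the second is designed to match the hypotheses of the classification.

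The main obstacle I anticipate is the quantitative heart of the second paragraph: converting the numerical hypothesis $\operatorname{codim}\Sigma\le r-2$ into the precise statement that a general point of $\Sigma$ lies on a linear subspace of $\IW$ of exactly the dimension required for a scroll, and that the resulting sweep carries no excess degree. This is where the Steiner-type incidence of Section \ref{S:steiner} should enter, producing the rulings as intersections of linearly varying families of hyperplanes; it is also where one must exclude the degenerate possibilities---$\Sigma$ failing to span its ambient space, non-reduced scheme structure, or the borderline Veronese case---before the classification applies cleanly.
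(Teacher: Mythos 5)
There is a genuine gap: your argument never produces the one tool the theorem actually rests on, namely the interpolation statement for rational normal curves of integrable forms (Proposition~\ref{P:Chave}, the codimension-one case of Panasyuk/Bouetou--Dufour). That proposition says: if $\dim W=\rank(W)=n+1$ and one has $n+3$ integrable classes in general position in $\mathbb P(W)$, then the \emph{unique} rational normal curve through them consists entirely of integrable forms. Its proof uses the dual meromorphic frame $v_1,\dots,v_{n+1}$ with $\omega_i(v_j)=\delta_{ij}$ (this is where $\rank(W)=\dim W$ enters), Steiner's construction to write the curve as $\bigcap_i H_i(s:t)$, and a degree count: the expression $[\zeta_i(s,t),\zeta_j(s,t)]\wedge\zeta_1(s,t)\wedge\cdots\wedge\zeta_n(s,t)$ has coefficients of degree $n+2$ in $(s,t)$ and vanishes at $n+3$ points, hence identically. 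The theorem then follows by slicing: a generic $W'\subset W$ with $\dim W'=\operatorname{codim}\Sigma+2$ satisfies $\dim W'=\rank(W')$ precisely because $\operatorname{codim}\Sigma+2\le\rank(W)$ --- this is the only place the numerical hypothesis is used --- and $\mathbb P(W')$ meets $\Sigma$ in a curve $C$ that is a component of $\mathrm{I}_{W'}$. If $\deg C$ exceeded the minimal value, interpolating rational normal curves through general points of $C$ would contradict $C$ being an irreducible component. Your proposal gestures at Section~\ref{S:steiner} but never performs this interpolation, never reduces to the curve case, and never uses the codimension hypothesis except as a vague ``calibration.'' You yourself flag this as ``the quantitative heart''; it is not a technical detail to be filled in along your route, it is a different route.

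Two further points would sink the argument as written even if pursued. First, your ``basic source of lines'' takes $f\omega$ for a function $f$; but $W$ is a fixed finite-dimensional vector space, not a module over the local ring, so $f\omega$ has no reason to lie in $W$, and hence $[f\omega]$ is not a point of $\mathbb P(W)$ at all. Second, the strategy of showing $\Sigma$ is swept by linear spaces and then ``matching the hypotheses of the classification'' is logically backwards: the Bertini--del~Pezzo classification describes varieties already known to have minimal degree, and being ruled by linear subspaces does not by itself bound the degree (plenty of scrolls of non-minimal degree exist). To conclude minimal degree you must bound $\deg\Sigma$ from above by $\operatorname{codim}\Sigma+1$, which is exactly what the interpolation-plus-slicing argument delivers and what your outline does not. (The quadric-intersection observations in your first paragraph are correct but are not used in the paper's proof and do not suffice here.)
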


Recall that a variety is  of minimal degree if its degree exceeds by one its codimension
in its linear span, that is
\[
X \text{ is of minimal degree} \iff \deg X = \dim \mathrm{Span}(X) - \dim X + 1 \, .
\]
They are well understood, and in particular  are completely
classified (see \cite{Eisenbud-Harris} and references therein).
Any variety of minimal degree is
either a linear subspace, a quadric hypersurface (eventually singular), a
rational normal scroll, the Veronese surface in $\mathbb{P}^{5}$ or a cone
over such a surface. When the rank and the dimension of $W$ coincide  we explore this classification to obtain the more precise result
below.

\begin{THM}\label{T:2}
If  $ \rank(W)= \dim W$ then every irreducible component of $\IW$ is either a linear subspace or a rational normal curve in its linear span.
\end{THM}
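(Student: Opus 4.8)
The plan is to deduce Theorem~\ref{T:2} from Theorem~\ref{T:1} together with the classification of varieties of minimal degree, using the hypothesis $\rank(W)=\dim W$ to discard every item of that classification except linear subspaces and rational normal curves. Write $r=\dim W=\rank(W)$, so that $\mathbb P(W)\cong\mathbb P^{r-1}$. The first thing I would record is an elementary consequence of the hypothesis: if $\omega_1,\dots,\omega_r$ is a basis of $W$ then $\omega_1\wedge\cdots\wedge\omega_r\neq 0$, so at a generic point the covectors $\omega_i$ are linearly independent; consequently \emph{every} subspace $V\subset W$ satisfies $\rank(V)=\dim V$, since a wedge of linearly independent combinations of the $\omega_i$ is already nonzero at that generic point. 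This heredity of the maximal-rank condition is what lets me pass to the linear span of a component without losing the hypothesis.

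Next I would carry out the dimension reduction. Let $\Sigma$ be an irreducible component of $\IW$ and set $k=\dim\Sigma$, so that $\operatorname{codim}_{\mathbb P(W)}\Sigma=r-1-k$. If $k=0$ then $\Sigma$ is a point, hence a linear subspace and there is nothing to prove. If $k\ge 1$ then $\operatorname{codim}_{\mathbb P(W)}\Sigma\le r-2=\rank(W)-2$, so Theorem~\ref{T:1} applies and $\Sigma$ is a variety of minimal degree. Moreover a variety of minimal degree of dimension at most $1$ is automatically a linear subspace (a point or a line) or a rational normal curve spanning its linear span, so the theorem is already settled for $k\le 1$. It therefore remains to exclude the case $k\ge 2$ with $\Sigma$ non-linear.

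By the classification recalled after Theorem~\ref{T:1}, such a $\Sigma$ is a (possibly singular) quadric hypersurface of dimension $\ge 2$, a rational normal scroll of dimension $\ge 2$, the Veronese surface in $\mathbb P^5$, or a cone over it. To rule these out I would work with the symmetric bilinear map $B(\alpha,\beta)=\alpha\wedge d\beta+\beta\wedge d\alpha$, for which $\IW=\{[\omega]:B(\omega,\omega)=0\}$ and whose polarization controls the geometry: a linear subspace of $\mathbb P(W)$ lies in $\IW$ exactly when the corresponding subspace of $W$ is isotropic for $B$, while the embedded tangent space of $\IW$ at a smooth point $[\omega]$ is $\mathbb P(\ker B(\omega,-))$. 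After replacing $W$ by the linear span of $\Sigma$, which is legitimate by the heredity noted above, I would analyze the projective second fundamental form of $\Sigma$, which is precisely the obstruction to $B$ vanishing on the tangent directions and hence to $\Sigma$ being linear. The aim is to show that, under $\rank(W)=\dim W$, this second fundamental form is forced to be either zero, giving a linear subspace, or of the one-dimensional osculating type characteristic of a rational normal curve; the intermediate forms realized by quadrics, higher scrolls and the Veronese surface would then contradict the maximal-rank integrability identity.

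I expect the main obstacle to be exactly this last step, and within it the Veronese surface to be the delicate case. The ruled examples, namely quadrics of dimension $\ge 2$, higher scrolls and cones, are swept out by linear spaces, so they come with large isotropic subspaces $V$ on which $\alpha\wedge d\beta+\beta\wedge d\alpha=0$; combined with $\rank(V)=\dim V$ and with the fact that $\Sigma$ is a \emph{component} (so these isotropic directions cannot be enlarged), I expect to force the whole span into $\IW$ and thereby collapse $\Sigma$ to a linear subspace. The Veronese surface, however, contains no lines, so this argument does not reach it directly; there one must instead exploit that its defining quadrics have rank $3$ and are the $2\times 2$ minors of a symmetric matrix of linear forms, and check that such a determinantal structure is incompatible with the integrability quadrics $[\omega]\mapsto\omega\wedge d\omega$ once $\rank(W)=\dim W$. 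Making this incompatibility precise is the crux of the argument.
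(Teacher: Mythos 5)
Your reduction is sound and matches the paper's: the heredity of the condition $\rank=\dim$ under passing to subspaces, the observation that any positive-dimensional component has codimension at most $\rank(W)-2$ so that Theorem \ref{T:1} applies, and the fact that a minimal degree variety of dimension at most one is a linear space or a rational normal curve in its span. The problem is that everything after that --- the exclusion of quadrics of dimension $\ge 2$, of rational normal scrolls, of the Veronese surface and of cones --- is announced as a plan rather than proved, and you yourself flag the decisive incompatibility as ``the crux of the argument.'' As it stands, the proposal contains no proof of precisely the part of the theorem that requires work.

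Moreover, the tools you propose for that part are unlikely to suffice on their own. The polarized form $B(\alpha,\beta)=\alpha\wedge d\beta+\beta\wedge d\alpha$ and the second fundamental form only see the quadratic equations cutting out $\IW$; they do not encode the Frobenius-type constraints that make families of integrable forms special. The paper's exclusions all lean on genuinely differential input. The Veronese surface is ruled out by applying Proposition \ref{P:Chave} (Steiner's construction together with a degree count on $[\zeta_i,\zeta_j]\wedge\zeta_1\wedge\cdots\wedge\zeta_n$) to eight general points of $\Sigma$: this produces a rational normal curve of odd degree $5$ inside $\IW$ which must lie on $\Sigma$ but cannot, since every curve on the Veronese surface has even degree. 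The cones $S(0,k)$ are ruled out using the existence, for a pencil of integrable $1$-forms, of a single meromorphic $1$-form $\eta$ with $d(s\omega_0+t\omega_i)=\eta\wedge(s\omega_0+t\omega_i)$ for all $(s,t)$, which after a coordinate computation forces the entire linear span into $\IW$, contradicting that $\Sigma$ is a component. The smooth scrolls $S(a,b)$ with $a,b\ge 1$ are ruled out by induction after restricting $W$ to the leaves of the foliations defined by points of $C_a\cup C_b$; this restriction realizes the linear projection of $S(a,b)$ to $S(a-1,b)$ or $S(a,b-1)$ and again forces the span into $\IW$. None of these steps is recoverable from the quadrics alone, so to complete your argument you would need to import or rediscover this differential machinery; in particular, your determinantal approach to the Veronese surface would still have to explain why its rank-three quadrics conflict with integrability, which is exactly what the parity-of-degree argument via Proposition \ref{P:Chave} accomplishes.
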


Theorem \ref{T:2} turns out to be sharp as the concrete examples in Section \ref{S:examples} testify. In Section \ref{S:GZ}
we characterize when a given rational normal curve of integrable $1$-forms is an irreducible component of $\IW$ using a beautiful
geometric construction due to Gelfand and Zakharevich, see Corollary \ref{C:3}.

\medskip

It has to be pointed out that the hypothesis on the rank is rather restrictive, and one should not expect similar
results about the space of foliations on projective varieties. For example, it is well known that for a fixed integer $d \ge 1$,
foliations induced by generic pencils of degree $d$ hypersurfaces in $\mathbb P^n$, $n \ge 3$, spread the irreducible component $R_n(d,d)$ of
the space of foliations of degree $2d -2$. Its codimension in  the projective space $\mathbb P H^0(\mathbb P^n, \Omega^1_{\mathbb P^n}(2d))$
is
\[
\underbrace{(n+1)N_{2d -1}   - N_{2d} + n-1 }_{\dim \mathbb P H^0(\mathbb P^n, \Omega^1_{\mathbb P^n}(2d))}
 \quad - \quad \underbrace{ 2N_d - 2  }_{\dim R_n(d,d)},   \text{ where  } N_{k} = \binom{n+k}{ k} -1
\]
while its degree, according to \cite[Section 5.1]{CPV}, is
\[
\frac{1}{N_d -1} \binom{2N_d -2}{N_d} \, .
\]
In particular, for $n$ or $d$ sufficiently large, it is clear that the degree is considerably greater than the codimension.
It does not seem to be easy to infer properties of the degree and/or geometry of the
irreducible components of the space of foliations on projective varieties from Theorem \ref{T:2}.
Nevertheless, at the other extreme of the spectrum of compact complex manifolds, there are the manifolds
of algebraic dimension zero. Recall that the algebraic dimension of compact complex manifold $X$, commonly denoted
by $a(X)$, is the transcendence degree over $\mathbb C$ of its field of meromorphic functions. For this class
of manifolds  Theorem \ref{T:2} has the following consequence.

\begin{COR}
Let  $X$ be a  compact complex manifold and $\mathcal L$ be a line-bundle
over it. If $a(X)=0$ then the irreducible components of the space of codimension one foliations with
conormal bundle $\mathcal L$ are either linear subspaces or rational normal curves.
\end{COR}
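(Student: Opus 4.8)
The plan is to reduce the statement to Theorem \ref{T:2} by localizing at a general point of $X$, the crux being that the hypothesis $a(X)=0$ forces the rank of the relevant space of germs to be maximal. To set up notation, recall that a codimension one foliation $\mathcal F$ on $X$ with conormal bundle $\mathcal L$ is determined, up to a scalar, by a twisted $1$-form $\omega \in H^0(X,\Omega^1_X\otimes \mathcal L^{-1})$ satisfying $\omega\wedge d\omega =0$ (a condition that is well defined once $\mathcal L^{-1}$ is trivialized locally). Since $X$ is compact, $W := H^0(X,\Omega^1_X\otimes \mathcal L^{-1})$ is finite dimensional, and the space of such foliations is naturally an open subset of the projectivized integrable locus inside $\mathbb P(W)$; each of its irreducible components is an open subset of an irreducible component of that locus, so it suffices to classify the latter.

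Next I would localize. Choose a general point $p\in X$ together with a trivialization of $\mathcal L^{-1}$ near $p$; taking germs then yields a linear map $\rho\colon W\to \Omega^1(\mathbb C^n,0)$, where $n=\dim X$. Because $X$ is connected, a global section whose germ at $p$ vanishes is identically zero, so $\rho$ is injective and $\dim \rho(W)=\dim W$. Writing $\widetilde W=\rho(W)$, the identity principle gives $\mathrm I_{\widetilde W}=\IW$ as subvarieties of $\mathbb P(W)\cong\mathbb P(\widetilde W)$, since $\omega\wedge d\omega$ vanishes globally iff its germ at the general point $p$ vanishes; likewise, for each $r$, a wedge $\omega_1\wedge\cdots\wedge\omega_r$ vanishes as a germ at general $p$ iff it vanishes in $H^0(X,\Omega^r_X\otimes \mathcal L^{-r})$. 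Consequently $\rank(\widetilde W)$ equals the largest $r$ for which $\bigwedge^r W\to H^0(X,\Omega^r_X\otimes \mathcal L^{-r})$ is nonzero.

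The heart of the argument is to prove that $a(X)=0$ forces $\rank(\widetilde W)=\dim W$; equivalently, that any $\mathbb C$-linearly independent $\omega_1,\dots,\omega_m\in W$ satisfy $\omega_1\wedge\cdots\wedge\omega_m\neq 0$. Arguing by contradiction, I would take $m$ minimal with $\omega_1\wedge\cdots\wedge\omega_m = 0$; then $\omega_2,\dots,\omega_m$ are pointwise linearly independent on a dense open set $U$, and on $U$ one solves $\omega_1=\sum_{i=2}^m g_i\,\omega_i$ with coefficients $g_i$ that, by Cramer's rule applied to a local coframe, are meromorphic functions on $X$. If some $g_i$ were nonconstant we would get $a(X)\ge 1$, contrary to hypothesis; hence each $g_i$ is a constant $c_i$, and $\omega_1=\sum_{i=2}^m c_i\,\omega_i$ on $U$, and therefore on all of $X$, contradicting linear independence over $\mathbb C$. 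In particular a basis of $W$ wedges to a nonzero section of $\Omega^{\dim W}_X\otimes\mathcal L^{-\dim W}$, which forces $\dim W\le n$ and $\rank(\widetilde W)=\dim W$. With $\rank(\widetilde W)=\dim\widetilde W$ in hand, Theorem \ref{T:2} applies to $\widetilde W$ and shows that every irreducible component of $\mathrm I_{\widetilde W}=\IW$, hence of the space of foliations, is a linear subspace or a rational normal curve in its linear span.

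I expect the main obstacle to be the meromorphic-coefficient step: one must check carefully that the locally defined ratios $g_i$ extend to genuine global meromorphic functions on $X$ rather than merely holomorphic functions on $U$, which is precisely where the compactness of $X$ and the definition of $a(X)$ intervene, and that the dichotomy constant-versus-nonconstant is applied correctly. By contrast, the localization claims, namely injectivity of $\rho$ and the identifications $\mathrm I_{\widetilde W}=\IW$ and of the two notions of rank, are routine consequences of the identity principle on the connected manifold $X$.
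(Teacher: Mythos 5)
Your proposal is correct and follows essentially the same route as the paper: localize at a generic point, show that $a(X)=0$ forces any meromorphic linear relation among sections to be a relation with constant coefficients (hence $\rank = \dim W$), and then invoke Theorem \ref{T:2}. You merely spell out in more detail the steps the paper leaves implicit (injectivity of the localization map and the Cramer's-rule construction of the global meromorphic coefficients), and the only discrepancy is the harmless convention of twisting by $\mathcal L^{-1}$ rather than $\mathcal L$.
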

\begin{proof}
We are interested in the irreducible components of
\[
\left\{  [\omega] \in \mathbb P H^0 (X, \Omega^1_X\otimes \mathcal L) \, \big\vert \, \omega \wedge d \omega = 0 \right\} \, .
\]
Localizing at a generic point $x \in X$, the sections of $\Omega^1_X\otimes \mathcal L$ determine germs of
holomorphic $1$-forms that span a finite dimensional vector space $W$ of $\Omega^1(X,x) \simeq \Omega^1(\mathbb C^n,0)$  of dimension $m$.
If $\bigwedge^{m} W \to \Omega^{m} (X,x)$ is the zero map then there exists meromorphic functions $a_1, \ldots, a_{m} \in \mathbb C(X)$ and a basis $\omega_1, \ldots, \omega_{m}$ of $H^0(X, \Omega^1_X\otimes \mathcal L)$ such that $a_1 \omega_1 + \ldots + a_m \omega_m = 0$. But the  hypothesis $\mathbb C(X) = \mathbb C$ leads to
a contradiction
that implies $\dim W= \rank (W)$. The corollary follows from Theorem \ref{T:2}.
\end{proof}

\section{Rational normal curves and the proof of Theorem \ref{T:1}}\label{S:steiner}

\subsection{Steiner's construction of rational normal curves}
A rational normal curve  in $\mathbb P^n$ is nothing more than a smooth non-degenerate rational curve
of degree $n$. Up to projective automorphisms there is only one rational normal curve in $\mathbb P^n$, and
 it can be seen as the image of natural morphism
\begin{align*}
 \mathbb P^1 & \longrightarrow \mathrm{Sym}^n \, \mathbb P^1 \simeq \mathbb P^n \\
 p &\longmapsto \underbrace{p + \cdots + p}_{n \text{ times } } \, .
\end{align*}
Notice that this map is induced by the complete linear  system $| \mathcal O_{\mathbb P^1}(n) |$.

\medskip

Given a set of $n+3$ points in general position in $\mathbb P^n$,  that is no subset of $n+1$ points is contained in a hyperplane,
there is a unique rational normal curve containing it. This curve can be synthetically
constructed through the following procedure which can be  traced back to Steiner.
Let $p_1, \ldots, p_{n+3}$ be the $n+3$ points under consideration, and  for $i$ ranging from $1$
to $n$ let $\Pi_i$ be the $\mathbb P^{n-2}$ spanned by  the points $p_1, \ldots, p_{i-1},p_{i+1}, \ldots, p_n$.
For a fixed $i$ there is a pencil of hyperplanes containing $\Pi_i$. The elements of this pencil can
be written as $H_i(s:t) = \{ s F_i + t G_i = 0 \}$ where $(s:t) \in \mathbb P^1$ and $F_i, G_i$ are
linear forms on $\mathbb C^{n+1}$. These linear forms can be chosen  in order that $p_{n+1} \in H_i(0:1)$,
$p_{n+2} \in H_i(1:0)$ and $p_{n+3} \in H_i(1:1)$. It turns out that the map
\[
(s:t)  \longmapsto \bigcap_{i=1}^n H_i (s:t )
\]
parameterizes the unique rational normal curve through $p_1, \ldots, p_{n+3}$. Indeed {${(0:1)}$,} $(1:0)$ and $(1:1)$ are
mapped to $p_{n+1}, p_{n+2}$ and $p_{n+3}$ respectively. Furthermore, for $i = 1, \ldots, n$, there exists one and only one hyperplane in the pencil  $H_i(s:t)$ containing $p_i$, and $p_i$ belongs to
$H_j(s:t)$ for every $j \neq i$ and every $(s:t) \in \mathbb P^1$.

\subsection{Rational normal curves of integrable $1$-forms}

The following proposition  is a rephrasing of   the codimension one case
of \cite[Thm. 4.1]{Bouetou-Dufour}. The result, in codimension one as well as in arbitrary codimension,  is originally due to Panasyuk \cite{Panasyuk} and
settles a conjecture of Zakharevich \cite{Zaka}. In all these works
rational normal curves of integrable $1$-forms appear under the label of Veronese webs, a terminology
introduced in \cite{GZ}.

\begin{prop}\label{P:Chave}
Let  $W \subset \Omega^1(\mathbb C^{n+1}, 0)$ be a finite dimensional vector space with $\dim W =  \rank( W)$.
If there are $\dim W+2$ classes of integrable $1$-forms in general position in $\mathbb P(W)$  then the unique rational normal curve
through them parametrizes  classes of integrable $1$-forms.
\end{prop}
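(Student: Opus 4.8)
The plan is to pass to the standard parametrization of the rational normal curve and to reduce the whole statement to a single divisibility-plus-syzygy assertion. Set $N=\dim W-1$, so that $\mathbb P(W)=\mathbb P^N$ and the $\dim W+2=N+3$ given points in general position determine a unique nondegenerate rational normal curve. Up to a projective change of coordinates I may write this curve as $\omega(s:t)=\sum_{k=0}^N s^{N-k}t^k\,\eta_k$, where $\eta_0,\dots,\eta_N$ is a basis of $W$. Here is the only place the rank hypothesis enters, but it enters decisively: since $\dim W=\rank(W)$, \emph{every} basis of $W$ has nonzero top wedge, so $\eta_0\wedge\cdots\wedge\eta_N\neq 0$. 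In particular the $\eta_k$ are pointwise linearly independent on a dense open set $U\subset(\mathbb C^{n+1},0)$, and consequently $\omega(s:t)\neq 0$ for every $(s:t)\in\mathbb P^1$.

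Next I would expand the integrability form along the curve. Writing $\Phi(s,t):=\omega(s:t)\wedge d\omega(s:t)=\sum_{m=0}^{2N}s^{2N-m}t^m\,C_m$ with $C_m=\sum_{k+l=m}\eta_k\wedge d\eta_l\in\Omega^3(\mathbb C^{n+1},0)$, the whole curve consists of integrable forms if and only if $\Phi\equiv 0$, i.e.\ if and only if every $C_m$ vanishes. Two inputs constrain $\Phi$. First, integrability at the $N+3$ given points says that $\Phi$ vanishes at the $N+3$ distinct parameters $(s_j:t_j)$ of those points; since $\Omega^3$ is a torsion-free $\mathbb C[s,t]$-module, $\Phi$ is divisible by $\prod_j(t_j s-s_j t)$, whence $\Phi=\big(\prod_{j=1}^{N+3}\ell_j\big)R$ with $R\in\Omega^3\otimes\mathbb C[s,t]_{N-3}$. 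Second, the tautological identity $\omega\wedge\Phi=\omega\wedge\omega\wedge d\omega=0$ holds for all $(s:t)$; dividing by the nonzero scalar polynomial $\prod_j\ell_j$ then yields the clean relation $\omega(s:t)\wedge R(s,t)\equiv 0$.

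It remains to deduce $R\equiv 0$, and this is the heart of the matter. When $N\le 2$ the degree $N-3$ is negative, so $R=0$ for free and the proposition follows at once (this disposes of the pencil and conic cases). When $N=3$ the form $R=R_0$ is constant, and $\omega\wedge R_0\equiv 0$ forces $\eta_k\wedge R_0=0$ for every $k$; independence of the $\eta_k$ then gives $R_0=0$. The genuine work is the case $N\ge 4$: I must show that wedging with $\omega$ is injective on $\Lambda^3$-valued binary forms of degree $N-3<N=\deg\omega$. Working at a point $x\in U$, where $e_k:=\eta_k(x)$ are independent, this is precisely the assertion that the Koszul-type complex attached to the nowhere-vanishing section $\omega(s:t)=\sum s^{N-k}t^k e_k$ of $E\otimes\mathcal O_{\mathbb P^1}(N)$ admits no nonzero global section in its kernel below degree $N$ --- equivalently, an exactness statement for the linear strand of the Eagon--Northcott resolution of the rational normal curve. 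I expect this injectivity to be the main obstacle: the relations among the coefficients $C_m$ are honest coupled Koszul relations, and controlling them is exactly what the rank hypothesis buys, replacing the naive count of vanishing points (which fails by $N-2$ once $N\ge 3$). Once $R\equiv 0$ is established, $\Phi\equiv 0$, every $C_m$ vanishes, and the entire rational normal curve parametrizes integrable $1$-forms, as claimed.
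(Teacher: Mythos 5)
Your reduction is set up correctly as far as it goes (the expansion of $\Phi=\omega\wedge d\omega$ in degree $2N$, the divisibility by the $N+3$ linear forms, and the relation $\omega(s{:}t)\wedge R(s,t)\equiv 0$ obtained from $\omega\wedge\omega\wedge d\omega=0$), but the argument is left unfinished exactly where you say the ``genuine work'' lies: for $N\ge 4$ you never prove that wedging with $\omega(s{:}t)=\sum_k s^{N-k}t^k\eta_k$ is injective on $\Lambda^3$-valued binary forms of degree $N-3$. You name this as an exactness statement and then write that you ``expect this injectivity to be the main obstacle''; an expectation is not a proof, and since every case $N\ge 4$ of the proposition rests on it, this is a genuine gap. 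For the record, the claim is true and not hard to close: pointwise the kernel of $\omega\wedge(-)$ on $\Lambda^3$ is $\omega\wedge\Lambda^2$, so the kernel subbundle of $\Lambda^3V\otimes\mathcal O_{\mathbb P^1}$ is $\mathcal O_{\mathbb P^1}(-N)\otimes\Lambda^2 Q$ where $Q=(V\otimes\mathcal O_{\mathbb P^1})/\mathcal O_{\mathbb P^1}(-N)$; one checks $H^0(Q(-2))=0$ (because the monomials $s^{N-k}t^k$ span all of $H^0(\mathcal O_{\mathbb P^1}(N))$), whence $Q\simeq\mathcal O_{\mathbb P^1}(1)^{\oplus N}$ and the kernel twisted by $\mathcal O(N-3)$ is $\mathcal O_{\mathbb P^1}(-1)^{\oplus\binom N2}$, which has no sections. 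You would also need a word about the ambient dimension possibly exceeding $\dim W$ (the forms $d\eta_l$ need not lie in $\Lambda^2$ of the span of the $\eta_k(x)$), but this is handled by the same bundle computation or by generic hyperplane sections.

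It is worth noting why the paper never meets this obstacle. Instead of expanding $\omega\wedge d\omega$, it passes to the dual frame of meromorphic vector fields $v_1,\dots,v_{n+1}$ (this is where $\dim W=\rank(W)$ is used) and realizes the rational normal curve by Steiner's construction as $\bigcap_i H_i(s{:}t)$, i.e.\ as the common kernel of $n$ pencils of vector fields $\zeta_i(s,t)$ linear in $(s,t)$. Integrability becomes the Frobenius condition $[\zeta_i,\zeta_j]\wedge\zeta_1\wedge\cdots\wedge\zeta_n=0$, whose coefficients have degree only $n+2$ in $(s,t)$; vanishing at $n+3$ points then kills it by the naive count, with no syzygy argument needed. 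Your route is viable and arguably more self-contained once the injectivity lemma is supplied, but as written the central step is missing.
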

\begin{proof}
For  $\dim W \le 2$, the proposition is evident. So we will assume that $\dim W \ge 3$.
Moreover, after taking generic hyperplane sections,  we can also assume that $\dim W = \rank(W) =  n +1$.
Let
$p_1=[\omega_1], \ldots, p_{n+3}= [\omega_{n+3}]$ be $n+3$ points in general position  in $\mathbb P(W)$.
Since $\rank(W) = n+1$,  there exist germs of meromorphic vector
fields $v_1, \ldots, v_{n+1}$ satisfying
\[
\omega_i ( v_j ) = \delta_{ij} \,,  \quad   \quad  i, j \in \{ 1, \ldots, n+1 \} \, ,
\]
where $\delta_{ij}$ is the Kroenecker symbol. The hyperplanes in  $\mathbb P(W)$ are in one to one
correspondence with the lines in the space $V$ generated by $v_1, \ldots, v_{n+1}$. To wit, $V$  is
a concrete realization of the dual of $W$.

The hyperplanes $H_i(s:t)$
containing  $p_1, p_{i-1}, p_{i+1},  p_n$
are thus defined by the linear family of  vector fields
\[
\zeta_i(s,t) = s ( a v_i + b v_{n+1} ) + t (  c v_i + d v_{n+1} )
\]
where $a,b,c,d$ are complex numbers satisfying $ad-bc \neq 0$.  Hence
the unique rational normal curve through $[\omega_1],  \ldots, [\omega_{n+3}]$ is parametrized
by
\[
\langle \zeta_1(s,t) \wedge \cdots \wedge \zeta_{n}(s,t) \, , \, \omega_1 \wedge \ldots \wedge \omega_{n+1} \rangle \, . \,
\]
where $\langle \cdot, \cdot \rangle$ stands for the natural inner product.

Suppose now that the $1$-forms $\omega_1, \ldots, \omega_{n+3}$ are integrable. If this is the case
then for every $i, j \in \{ 1, \ldots, n \}$
\[
[ \zeta_i(s,t), \zeta_j(s,t) ] \wedge \zeta_1(s,t) \wedge \cdots \wedge \zeta_n(s,t)
\]
vanishes at $n+3$ distinct points $(s:t) \in \mathbb P^1$. But its coefficients have degree $n+2$ in the
variables $(s,t)$. Thus the above expression vanishes identically, which proves the proposition.
\end{proof}

\subsection{Proof of Theorem \ref{T:1}}Replace $W$ by a generic vector subspace $W'$ of dimension equal to
the codimension of $\Sigma$ plus two. Thus, since $W$ is generic,  $\dim W' = \rank(W')$ and $\mathbb P(W')$ intersects
$\Sigma$ at a curve $C$. Moreover, we can assume that $C$ is an irreducible component of $\mathrm{I}_{W'}$.

If $\Sigma$ is not of minimal degree then $C$ is also not of minimal degree. Replacing $W'$ by the linear span of
$C$ and applying Proposition \ref{P:Chave} to sufficiently many points in $C$ away from the other irreducible
components of $\mathrm{I}_{W'}$ one arrives at a contradiction which proves the theorem. \qed

\section{Varieties of minimal degree and the Proof of Theorem \ref{T:2}}

Suppose $W \subset \Omega^1(\mathbb C^n,0)$ is vector subspace satisfying $\dim W = \rank (W)$, and let
$\Sigma$ be an irreducible component of $\IW$ of dimension at least two.  Theorem \ref{T:1} implies that
$\Sigma$ is a variety of minimal degree. If it is not a linear subspace of $\mathbb P(W)$ then, after
replacing $W$ by a generic vector
subspace of appropriate dimension, we can assume that $\Sigma$ has dimension exactly two and it is a not
a plane linearly embedded in $\mathbb P(W)$. Moreover, it is harmless to  assume that
$\mathbb P(W)$ is the linear span of $\Sigma$.

To prove Theorem \ref{T:2} we aim at a contradiction. To obtain it we will analyze each of the classes
of surfaces of minimal degree. But first we recall in detail their  classification.

\subsection{Surfaces of minimal degree}
If $X \subset \mathbb P^n$ is a surface of  minimal degree
then $X$ is $\mathbb P^2$, or the embedding of $\mathbb P^2$ into $\mathbb P^5$ through the complete
linear system $| \mathcal O_{\mathbb P^2}(2)| \simeq \mathbb P^5$, or a rational normal scroll
$S(a,b)$ with $(a,b) \in \mathbb N^2 - \{ (0,0) \} $, and $a+ b +1 = n$.

\smallskip

The rational normal scrolls $S(a,b)\subset \mathbb P^{a+b+1}$ can be described as follows.
First consider two disjoint  linear subspaces $\mathbb P^a$ and $\mathbb P^b$ in   $\mathbb P^{n}$.
Consider now two rational normal curves $C_a \subset \mathbb P^a$ and $C_b\subset \mathbb P^b$, and let
$\varphi_a: \mathbb P^1 \to C_a$ and $\varphi_b: \mathbb P^1 \to C_b$ be their parametrizations. In case
$i=0$, $\varphi_i: \mathbb P^1 \to C_0 \subset \mathbb P^0$ is nothing more then the constant map. In all other
cases $\varphi_i$  is an isomorphic embedding. The rational normal scroll $S(a,b)$ is
the union of the lines $\overline{\varphi_a(t) \, \varphi_b(t)}$ for $t$ varying in $\mathbb P^1$. Note that
when $a=0$ we have a cone over a rational normal curve in $\mathbb P^{n-1}$.

\subsection{Veronese surface} We start the case by case analysis, excluding Veronese surfaces.

\begin{lemma}The surface $\Sigma$ is not a Veronese surface.
\end{lemma}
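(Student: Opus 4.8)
The plan is to derive a contradiction from the assumption that $\Sigma$ is the Veronese surface, exploiting the fact that the Veronese contains no lines. Recall that $\Sigma \subset \mathbb P(W) = \mathbb P^5$ is assumed to be the image of $\mathbb P^2$ under $|\mathcal O_{\mathbb P^2}(2)|$, and that $\dim W = \rank(W) = 6$. The key structural tool available is Proposition \ref{P:Chave}: through any six (i.e. $\dim W + 2 = 6$, here adjusting for the rank-$n+1$ normalization) points of $\mathbb P(W)$ in general position lying on integrable forms, the unique rational normal curve passing through them consists entirely of integrable $1$-forms. The Veronese surface is swept out by a two-parameter family of conics, and each such conic is a rational normal curve in its own $\mathbb P^2$-span.

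First I would observe that the Veronese surface contains an abundance of rational normal curves: the images of lines in $\mathbb P^2$ are conics (rational normal curves in $\mathbb P^2$), and more generally the images of conics in $\mathbb P^2$ are quartic rational normal curves spanning a $\mathbb P^4$. The strategy is to produce a rational normal curve in $\mathbb P(W)$, guaranteed by Proposition \ref{P:Chave} to parametrize integrable forms, that is \emph{not} contained in $\Sigma$, thereby forcing the integrable locus $\IW$ to be strictly larger than $\Sigma$ near generic points of $\Sigma$. Concretely, I would select six points on $\Sigma$ in general position in $\mathbb P(W) = \mathbb P^5$; since all points of $\Sigma$ correspond to integrable forms, Proposition \ref{P:Chave} applies and yields a rational normal curve $R$ of integrable forms through these six points. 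A twisted cubic (the rational normal curve in $\mathbb P^3$) or rather the degree-$5$ rational normal curve in $\mathbb P^5$ is what one obtains here.

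The crux is then a dimension/degree incompatibility. The rational normal curve $R$ of degree $5$ in $\mathbb P^5$ produced by Proposition \ref{P:Chave} meets $\Sigma$ in the six chosen general points. But $\Sigma$, being the Veronese, is cut out by quadrics, and a degree-$5$ curve meeting a quadric-defined surface in many points must either lie inside $\Sigma$ or violate a Bézout-type count. I would show $R \not\subset \Sigma$: the Veronese contains no rational normal curve of degree $5$ spanning $\mathbb P^5$, since every irreducible curve on the Veronese is the image of a plane curve and hence has even degree (the hyperplane class pulls back to $\mathcal O_{\mathbb P^2}(2)$). Thus $R$ is a new curve of integrable forms, and taking enough such curves through generic collections of points on $\Sigma$ forces $\IW$ to contain an open neighborhood of $\Sigma$ in its span, contradicting that $\Sigma$ is an irreducible \emph{component} of $\IW$ of dimension exactly two.

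The main obstacle I anticipate is making the last step rigorous: I must guarantee that the rational normal curve $R$ furnished by Proposition \ref{P:Chave} genuinely escapes $\Sigma$ rather than degenerating back onto it, and that the union of all such curves sweeps out a variety of dimension strictly greater than $\dim \Sigma = 2$. The parity argument (odd degree $5$ versus even degrees of curves on the Veronese) is the clean way to rule out $R \subset \Sigma$, but I would need to verify that the six general points really do admit a configuration in general position in the sense required by Proposition \ref{P:Chave}, and that the resulting family of curves $R$ varies nontrivially as the point configuration varies, so that their union has dimension at least three. This nondegeneracy of the sweep is where the genuine work lies; once it is in place, the contradiction with $\Sigma$ being an isolated two-dimensional component is immediate.
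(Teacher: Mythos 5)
Your central idea --- that the rational normal curve produced by Proposition \ref{P:Chave} has odd degree $5$ while every curve on the Veronese surface has even degree --- is exactly the parity argument the paper uses. But there are two genuine problems with your write-up. First, the point count is wrong: the linear span of the Veronese surface is $\mathbb P^5$, so $\dim W = 6$ and Proposition \ref{P:Chave} requires $\dim W + 2 = 8$ points in general position, not six. This is not cosmetic: through only six general points of $\mathbb P^5$ there is a positive-dimensional family of rational normal curves, so neither the uniqueness you invoke nor the hypothesis of the proposition is available. The paper takes eight points.

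Second, and more importantly, your concluding step is both harder than necessary and left unproved. You try to derive the contradiction by sweeping out a three-dimensional locus with the curves $R$ as the point configuration varies, and you yourself flag that ``this nondegeneracy of the sweep is where the genuine work lies.'' That work is never done, and it is not needed. The clean route is to choose the eight points on $\Sigma$ \emph{away from every other irreducible component of} $\IW$ (possible, since those components meet $\Sigma$ in a proper closed subset). The curve $C$ given by Proposition \ref{P:Chave} is irreducible and contained in $\IW$, hence contained in a single irreducible component of $\IW$; since it passes through points lying only on $\Sigma$, that component must be $\Sigma$ itself, so $C \subset \Sigma$. This directly contradicts the parity of degrees, with no dimension count on a family of curves. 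A single well-chosen curve suffices.
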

\begin{proof}
Assume $\Sigma$ is a Veronese surface.
Consider  eight points in general position contained in $\Sigma$ but not contained in any other
irreducible component of $\IW$. Let $C$ be the
unique rational normal curve  $C$ passing through them.

On the one hand $C$ is not contained in  $\Sigma$,
since $\deg C=5$ is odd and every curve in $\Sigma$ has even degree. Indeed, intersecting
a curve in $\Sigma$ with an hyperplane is the same as intersecting its pre-image under
the Veronese embedding $\mathbb P^2 \to \mathbb P^5$ with a conic.

On the  other hand, Proposition \ref{P:Chave} ensures that $C \subset \IW$. The choice of the eight points implies
$C$ must also be contained in $\Sigma$. This contradiction proves the lemma.
\end{proof}

\subsection{Pencils of integrable $1$-forms}
Now we turn our attention to the possibility of $\Sigma$ be a rational normal scroll.
We will first exclude the degenerate cases $\Sigma=S(0,n-1)$, $n\ge 3$. Notice that
these cases are characterized by their non-smoothness.

\begin{lemma} \label{L:lisa}
The surface $\Sigma$ is smooth.
\end{lemma}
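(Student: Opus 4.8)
The plan is to show that $\Sigma$ cannot be a cone, since among the surfaces of minimal degree listed above the only non-smooth ones are the degenerate scrolls $S(0,n-1)$, $n \ge 3$, whose unique singular point is the vertex of the cone ($\mathbb{P}^2$, the Veronese surface, and the scrolls $S(a,b)$ with $a,b \ge 1$ being smooth). So I would argue by contradiction, assuming $\Sigma = S(0,n-1)$ is a cone with vertex $v = [\omega_0]$, and mimic the strategy of the previous lemma: produce a rational normal curve forced to lie in $\Sigma$ by Proposition \ref{P:Chave}, but which cannot lie on the cone for geometric reasons.

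Concretely, I would choose $\dim W + 2 = n+3$ points on $\Sigma$ that are general on $\Sigma$, in general position in $\mathbb{P}(W) = \mathbb{P}^n$, and not contained in any other irreducible component of $\IW$ (possible because $\Sigma$ is non-degenerate and the other components are proper closed subsets). Let $C$ be the unique rational normal curve of degree $n$ through these points. Proposition \ref{P:Chave} gives $C \subset \IW$; since $C$ is irreducible it lies in a single component, and as it passes through $n+3$ points lying only on $\Sigma$, in fact $C \subset \Sigma$.

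The heart of the argument, and the step I expect to be the main obstacle, is to show that the cone $S(0,n-1)$ admits no rational normal curve of degree $n$ through $n+3$ general points of it. For this I would pass to the minimal resolution $\rho : \mathbb{F}_{n-1} \to \Sigma$ contracting the negative section $E$ (with $E^2 = -(n-1)$) to the vertex, writing $F$ for a fiber and noting $\rho^* H = E + (n-1)F$ for the hyperplane class $H$. The strict transform of an irreducible curve of degree $\delta$ has class $aE + bF$ with $\delta = (aE+bF)\cdot \rho^* H = b$; a curve avoiding the vertex satisfies $(aE+bF)\cdot E = 0$, forcing $\delta = a(n-1)$, which is impossible for $\delta = n$ when $n \ge 3$. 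Hence every degree-$n$ curve on $\Sigma$ passes through the vertex, and a smooth (hence irreducible, not containing $E$) such curve has class $E + nF$. Since $\dim |E+nF| = h^0(\mathbb{F}_{n-1}, \mathcal{O}(E+nF)) - 1 = n+2$, the family of these rational normal curves has dimension $n+2 < n+3$; as passing through a general point of $\Sigma$ is one independent linear condition on $|E+nF|$, no member passes through $n+3$ general points of $\Sigma$.

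This contradicts $C \subset \Sigma$, so $\Sigma$ is not a cone and is therefore smooth. The only delicate points left to nail down are the cohomology computation $h^0(\mathbb{F}_{n-1},\mathcal{O}(E+nF)) = n+3$ and the verification that $n+3$ general points of the non-degenerate surface $\Sigma$ are simultaneously in general position in $\mathbb{P}^n$ (so that Proposition \ref{P:Chave} applies) and impose independent conditions on the linear system; both are routine.
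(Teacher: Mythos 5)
Your argument is correct, but it takes a genuinely different route from the paper's. You rule out the cone $S(0,n-1)$ by the same enumerative mechanism the paper uses for the Veronese surface: via Proposition \ref{P:Chave} you produce a rational normal curve of integrable $1$-forms through $n+3$ general points of $\Sigma$, force it to lie on $\Sigma$, and then show on the resolution $\mathbb{F}_{n-1}$ that every irreducible degree-$n$ curve on the cone has strict transform of class $E+nF$, so these curves form only an $(n+2)$-dimensional family and miss $n+3$ general points. The steps you flag as routine really are routine: $E+nF$ is very ample on $\mathbb{F}_{n-1}$ (so successive general points impose independent conditions), $h^0(\mathcal{O}(E+nF))=(n+1)+2=n+3$, and $n+3$ general points of a non-degenerate irreducible surface are in linearly general position since no proper linear span of fewer points can contain $\Sigma$. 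The paper argues instead locally and analytically: after normalizing $\omega_0=dx_0$ and $\omega_i=g_i\,dx_i$, it exploits the fact that the lines through the vertex lie on the cone, hence in $\IW$, and uses the canonical $1$-form $\eta$ attached to a pencil of integrable $1$-forms to conclude that \emph{every} element of $W$ is integrable, contradicting the assumption that the cone is a proper component. Your approach is more uniform with the Veronese lemma and avoids the local computation, but the paper's proof yields strictly more: it establishes Lemma \ref{L:util} (if $S(0,k)\subset \IW$ then its linear span is contained in $\IW$), which serves as the base case of the induction in Proposition \ref{P:util}. If one adopted your proof of Lemma \ref{L:lisa}, Lemma \ref{L:util} would still require the paper's analytic argument.
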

\begin{proof}
If $\Sigma$ is not smooth then it must be the cone $S(0,n-1)$ over a rational normal curve in $\mathbb P^{n-1}$ with
$n\ge 3$.
The idea is to look at the line of integrable $1$-forms through the vertex of $S(0,n-1)$.
For that sake, let $\omega_0$ be a representative of the vertex and $\omega_1, \ldots, \omega_n$ be
representatives of points in $\Sigma$ away from the vertex  such that these $(n+1)$ differential forms
constitute a basis of $W$.

It will convenient to assume that all the non-zero $1$-forms in $W$ are non-zero at the origin.
Notice that this can be achieved after taking representatives and localizing outside the singular locus
of  $\omega_0 \wedge \ldots \wedge \omega_n$, which is non-zero thanks to the  assumption $\dim W = \rank (W)$.
Therefore there exists a choice of coordinates $x_0, \ldots, x_n$ in $\mathbb C^{n+1}$ for which
 $\omega_i = g_i d x_i$ where $g_0, \ldots, g_n$ are suitable germs of invertible functions.
Furthermore, after dividing all the $1$-forms by $g_0$, we can also assume that $\omega_0 = dx_0$.

For a fixed $i \in \{ 1, \ldots, n\}$, consider the linear family   $s \omega_0 + t \omega_i$
of integrable $1$-forms parametrized by $(s,t) \in \mathbb C^2$.
It is well known, see  for instance \cite{Cerveaupencil}, that there exists a unique meromorphic $1$-form $\eta_i$ such that
\[
d \left( s \omega_0 + t \omega_i \right)  =  \eta_i  \wedge \left( s \omega_0 + t \omega_i \right)
\]
for every $(s,t) \in \mathbb C^2$.
When $(s,t)=(1,0)$, the above  equation reads as $\eta_i \wedge dx_0 =0$.
The differentiation of this identity leads to $d\eta_i \wedge dx_0 = 0$. Combining these two identities with the
one obtained when $(s,t)=(0,1)$, one promptly
infers that
$
\eta_i = h_i(x_0,x_i) dx_0
$ for a suitable two variables function $h_i$.

Let now $\omega = \sum_{i=1}^n \lambda_i \omega_i$ be another integrable $1$-form distinct from the
previous ones. Of course, there exists such $1$-form since we are assuming that $\Sigma$ has dimension two.
As before we consider the linear family $s\omega_0 + t \omega$ and the corresponding $1$-form $\eta= h dx_0$ satisfying
\[d \omega = \eta \wedge \omega = \sum_{i=1}^n \lambda_i h f_i dx_0 \wedge dx_i \, . \]  Comparing this last identity with
\[
d \omega = \sum_{i=1}^n \lambda_i  d \omega_i  = \sum_{i=1}^n \lambda_i \eta_i \wedge \omega_i = \sum_{i=1}^n \lambda_i  h_i f_i dx_0 \wedge dx_i
\]
one deduces that $h_i = h = h(x_0)$.  Thus all the elements in $W$ are integrable contradicting the hypothesis that
$S(0,n-1)$, $n \ge 3$,  is an irreducible component of $\IW$.
\end{proof}

We have shown slightly more. The proof above  also shows  the following

\begin{lemma}\label{L:util}
If a rational normal scroll of the form $S(0,k)$, $k \ge 1$,  is contained in $\IW$ then
its linear span is also contained in $\IW$.
\end{lemma}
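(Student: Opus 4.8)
The plan is to extract the essential content from the proof of Lemma \ref{L:lisa} and observe that it already establishes this slightly stronger statement, at the cost of a minor reorganization. The key realization is that the argument in Lemma \ref{L:lisa} never genuinely used that the dimension of $\Sigma$ was exactly two; what it used was the presence of a vertex (a line of integrable $1$-forms lying in every osculating hyperplane, realized there as $\omega_0 = dx_0$) together with a rational normal curve $C_k \subset \mathbb P^k$ of integrable $1$-forms. So I would recast the setup directly for $S(0,k) \subset \mathbb P(W)$ with $\dim W = \rank(W) = k+1$, letting $\omega_0$ represent the vertex and choosing representatives $\omega_1, \ldots, \omega_k$ of points on the directrix curve so that together they form a basis of $W$.

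First I would repeat the normalization step: localizing away from the singular locus of $\omega_0 \wedge \cdots \wedge \omega_k$, which is nonzero because $\dim W = \rank(W)$, I may assume all nonzero forms in $W$ are nonvanishing at the origin, choose coordinates $x_0, \ldots, x_k$ with $\omega_i = g_i\, dx_i$ for invertible $g_i$, and divide through by $g_0$ so that $\omega_0 = dx_0$. Next, for each fixed $i$ I invoke the pencil argument (via \cite{Cerveaupencil}): the pencil $s\omega_0 + t\omega_i$ of integrable forms produces a unique meromorphic $\eta_i$ with $d(s\omega_0 + t\omega_i) = \eta_i \wedge (s\omega_0 + t\omega_i)$, and specializing to $(s,t) = (1,0)$ and $(0,1)$ and differentiating forces $\eta_i = h_i(x_0, x_i)\, dx_0$, exactly as before.

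The crux is then to upgrade from ``the forms on the scroll are integrable'' to ``every form in $W$ is integrable,'' which is precisely the conclusion that the linear span $\mathbb P(W)$ lies in $\IW$. For an arbitrary $\omega = \sum_{i=1}^k \lambda_i \omega_i$ I would again apply the pencil construction to $s\omega_0 + t\omega$, obtaining $\eta = h\, dx_0$, and compare the two expressions for $d\omega$: on one side $d\omega = \eta \wedge \omega = \sum_i \lambda_i h f_i\, dx_0 \wedge dx_i$, and on the other $d\omega = \sum_i \lambda_i\, d\omega_i = \sum_i \lambda_i h_i f_i\, dx_0 \wedge dx_i$. Matching coefficients gives $h_i = h = h(x_0)$, from which $d\omega = \eta \wedge \omega$ holds and $\omega$ is integrable. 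Since $\omega$ ranged over all of $W$ (modulo the vertex, which is integrable by construction), this shows $\mathbb P(W) \subset \IW$, proving the lemma.

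I expect the main subtlety to be bookkeeping rather than a genuine obstacle: I must be careful that the directrix rational normal curve $C_k$ genuinely supplies integrable representatives $\omega_1, \ldots, \omega_k$ forming a basis, and that the normalization reducing $\eta_i$ to the form $h_i(x_0,x_i)\,dx_0$ goes through verbatim for general $k \ge 1$ (for which Proposition \ref{P:Chave} guarantees that the whole curve $C_k$, not merely finitely many seed points, consists of integrable forms). The key step that does all the work is the coefficient comparison forcing $h_i = h$ to be a function of $x_0$ alone; everything else is a faithful transcription of the earlier proof with ``dimension two'' replaced by the weaker hypothesis that $S(0,k)$ is merely \emph{contained} in $\IW$.
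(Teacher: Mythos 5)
Your overall strategy is exactly the paper's: Lemma \ref{L:util} is stated without a separate proof precisely because the argument for Lemma \ref{L:lisa} already establishes it, and your transcription of that argument is the intended one. However, one step of your write-up is circular as literally stated. You say: ``For an arbitrary $\omega = \sum_{i=1}^k \lambda_i\omega_i$ I would again apply the pencil construction to $s\omega_0+t\omega$, obtaining $\eta = h\,dx_0$.'' The existence of a meromorphic $\eta$ with $d(s\omega_0+t\omega)=\eta\wedge(s\omega_0+t\omega)$ (the result quoted from \cite{Cerveaupencil}) presupposes that the pencil $s\omega_0+t\omega$ consists of \emph{integrable} forms; for an arbitrary $\omega\in W$ that is exactly what you are trying to prove, so you may not invoke it. The paper avoids this by taking $\omega$ to be \emph{another integrable} $1$-form, i.e.\ a point of the cone itself --- concretely a point of the directrix curve with all coordinates $\lambda_i$ nonzero, which exists because the directrix is nondegenerate in its span (here $k\ge 2$; for $k=1$ the lemma is trivial since $S(0,1)=\mathbb P^2$ is its own span). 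For such $\omega$ the pencil $s\omega_0+t\omega$ lies in the cone, the Cerveau lemma applies, and the coefficient comparison forces $h_i=h$ for every $i$, hence $h_i=h_i(x_0)$ because $h_i$ depends only on $(x_0,x_i)$ while $h_j$ depends only on $(x_0,x_j)$. Only \emph{then} does one conclude, for arbitrary $\alpha=\sum\mu_i\omega_i\in W$, that $d\alpha=\sum\mu_i\eta_i\wedge\omega_i = h(x_0)\,dx_0\wedge\alpha$, so every element of $W$ is integrable. The order of quantifiers is the whole point of this step.

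A secondary bookkeeping issue: $S(0,k)$ spans a $\mathbb P^{k+1}$, so after replacing $W$ by the linear span you need $\dim W = k+2$ and a basis $\omega_0,\omega_1,\ldots,\omega_{k+1}$ consisting of the vertex together with $k+1$ points of the directrix curve in $\mathbb P^k$; your count $\omega_0,\ldots,\omega_k$ with $\dim W=k+1$ is off by one. Neither issue requires a new idea to repair --- both fixes are already present in the proof of Lemma \ref{L:lisa}.
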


Notice that in the the extremal case $k=1$, $S(0,1)$ is nothing more than $\mathbb P^2$.

\subsection{Projections versus restrictions and the proof of Theorem \ref{T:2}}
To conclude the proof of Theorem \ref{T:2} it remains to consider the rational normal scrolls $S(a,b)$ with $a,b \ge 1$.
This is done in the next proposition.

\begin{prop}\label{P:util}
If a rational normal scroll of the form $S(a,b)$   is contained in $\IW$ then
its linear span is also contained in $\IW$.
\end{prop}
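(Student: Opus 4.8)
The plan is to prove directly that every form in the linear span of $S(a,b)$ is integrable, so that this span is contained in $\IW$; in the setting of Theorem \ref{T:2}, where the span is all of $\mathbb{P}(W)$ of dimension $a+b+1\ge 3$, this will contradict $\Sigma=S(a,b)$ being a two--dimensional component. First I would replace $W$ by the linear span of the scroll, so that the span equals $\mathbb{P}(W)$ and the goal becomes $\omega\wedge d\omega=0$ for every $\omega\in W$. As in Proposition \ref{P:Chave}, I would reduce to the top--rank case $\dim W=\rank W=n$ by taking generic hyperplane sections of the source $(\mathbb{C}^n,0)$: these preserve integrability and, as long as $\rank W<n$, preserve the equality $\dim W=\rank W$ while lowering $n$. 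This reduction is harmless for the conclusion, since a germ of $3$--form restricting to zero on a generic linear subspace of dimension $\ge 3$ must vanish, so it suffices to prove the statement after such restrictions. I would then fix the adapted data $W=W_a\oplus W_b$, with directrices $C_a\subset\mathbb{P}(W_a)$, $C_b\subset\mathbb{P}(W_b)$ and rulings $\ell_t=\langle\omega_a(t),\omega_b(t)\rangle$.

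The key step is a decomposition. Viewing $\omega\mapsto\omega\wedge d\omega$ as a quadratic map on $W$ with values in $\Omega^3(\mathbb{C}^n,0)$, the hypothesis $S(a,b)\subset\IW$ says precisely that this quadratic map vanishes on the scroll. Since the homogeneous ideal of a rational normal scroll is generated in degree two by the $2\times 2$ minors $M_1,\dots,M_N$ of its defining two--rowed matrix of linear forms, I can write
\[
\omega\wedge d\omega=\sum_{k=1}^N M_k\,\Theta_k,\qquad \Theta_k\in\Omega^3(\mathbb{C}^n,0)\ \text{independent of}\ \omega .
\]
The whole problem is thereby reduced to showing that every residual $\Theta_k$ vanishes.

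To kill the residuals I would feed in the integrability carried by the curves and rulings of the scroll. Each ruling $\ell_t$ is a pencil of integrable $1$--forms, so by the result on pencils recalled in the proof of Lemma \ref{L:lisa} there is a unique meromorphic $\eta(t)$ with $d\omega_a(t)=\eta(t)\wedge\omega_a(t)$ and $d\omega_b(t)=\eta(t)\wedge\omega_b(t)$, and the directrices are integrable as well. Comparing, for each monomial occurring in the $M_k$, the corresponding coefficient on both sides of the displayed identity converts these Frobenius relations into linear membership conditions in the coframe given by the top--rank basis. In the base case $S(1,1)$ (a smooth quadric) the four vertices are themselves integrable, giving $d\omega_i=\theta_i\wedge\omega_i$, and the four cross--conditions take the transparent form $\theta_i-\theta_j\in\langle\omega_i,\omega_j\rangle$; chaining these around the quadric forces the single residual to vanish, whence $\omega\wedge d\omega\equiv 0$. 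I expect the main obstacle to be carrying out this bookkeeping for general $(a,b)$, where the directrices are honest rational normal curves and the relevant data is $t$--dependent. The natural tools are a degree--in--$t$ vanishing argument in the spirit of Proposition \ref{P:Chave} (a quantity of controlled degree in the ruling parameter that vanishes on the whole scroll must vanish identically), together with an induction peeling off one directrix so as to reduce $S(a,b)$ either to $S(a-1,b)$ or to the cone $S(0,k)$ already settled by Lemma \ref{L:util}. Once all $\Theta_k=0$, the identity yields $\omega\wedge d\omega\equiv 0$ on $W$, so the linear span $\mathbb{P}(W)$ is contained in $\IW$, which is the assertion of the proposition.
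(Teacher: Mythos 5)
Your framework is sound up to a point: the reduction to the case where $\mathbb P(W)$ is the span of the scroll and $\dim W=\rank W$ equals the ambient dimension is legitimate, and the decomposition $\omega\wedge d\omega=\sum_k M_k\,\Theta_k$ with $\Theta_k\in\Omega^3(\mathbb C^n,0)$ is valid (compose the quadratic map with linear functionals on its finite-dimensional image and use that $I(S(a,b))_2$ is spanned by the minors $M_k$). Your treatment of $S(1,1)$ also checks out: the four ruling conditions give $(\theta_i-\theta_j)\wedge\omega_i\wedge\omega_j=0$ along the four lines of the quadric through the coordinate points, and intersecting $\langle\omega_0,\omega_1,\omega_3\rangle$ with $\langle\omega_0,\omega_2,\omega_3\rangle$ does force $\theta_0-\theta_3\in\langle\omega_0,\omega_3\rangle$, killing the single residual. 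But the proof stops there. For general $(a,b)$ you explicitly defer the vanishing of the $\Theta_k$ to ``bookkeeping'' and to ``natural tools'' (a degree-in-$t$ argument, an induction ``peeling off one directrix''), without supplying the mechanism that would make either one run. In particular, ``peeling off a directrix'' is not an operation you have defined: $S(a-1,b)$ is the image of $S(a,b)$ under a \emph{linear projection} from a point of $C_a$, and a linear projection of $\mathbb P(W)$ does not by itself produce a new vector space of $1$-forms to which an induction hypothesis about $\mathrm I$ of something could be applied. That is the genuine gap: the heart of the general case is missing.

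The idea you are missing is how to realize that projection differentially. The paper's device is to take $[\omega]$ on $C_a$ (or $C_b$), restrict all of $W$ to a leaf $L$ of the smooth foliation $\mathcal F_\omega$ (possible because one may assume every nonzero form in $W$ is nonvanishing), and observe that $W_L=\iota^*W$ has dimension and rank one less, while $\mathbb P(W)\dashrightarrow\mathbb P(W_L)$ is exactly the linear projection from $[\omega]$. Hence $S(a-1,b)\subset\mathrm I_{W_L}$, the induction hypothesis (with base case Lemma \ref{L:util}) gives $\mathrm I_{W_L}=\mathbb P(W_L)$, and since $L$ was an arbitrary leaf this says $\omega\wedge\alpha\wedge d\alpha=0$ for every $\alpha\in W$. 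Running this for four linearly independent forms with classes on $C_a\cup C_b$ (available since $a,b\ge1$, so $\dim W\ge4$ and $C_a\cup C_b$ spans) forces $\alpha\wedge d\alpha=0$ for all $\alpha$. If you want to salvage your ideal-theoretic decomposition, you would need to substitute some equally concrete step for this restriction-to-leaves argument; as written, the claim that the residuals vanish for $a+b\ge3$ is an expectation, not a proof.
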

\begin{proof}
Assume $\mathbb P ( W )$  coincides with  the linear span of $S(a,b)$.
We will proceed by induction, with  the basis being given by Lemma \ref{L:util}.
To prove the result for $S(a,b)$, with $a,b \ge 1$,  assume  it holds for $S(a-1,b)$ and $S(a,b-1)$.

We can suppose, see the proof of Lemma \ref{L:lisa}, that  every non-zero  $1$-form in $W$ is non-zero
at the origin. Thus, if $\omega \in W$ is an integrable $1$-form then it defines a smooth
foliation $\mathcal F_{\omega}$ on  $(\mathbb C^{n+1},0)$. Let $L \simeq (\mathbb C^n,0)$ be  an arbitrary leaf of $\mathcal F_{\omega}$.
Notice that we are abusing the notation here. The leaf $L$ does not necessarily passes through the origin of $\mathbb C^{n+1}$. We are thinking in terms
of a representative of $\omega$   defined on a connected neighborhood of the origin where  the foliation $\mathcal F_{\omega}$ is defined by
a submersion with connected fibers, and  $L$ is an arbitrary fiber of such submersion.

If $\iota: L \to \mathbb C^{n+1}$ denotes its inclusion into $\mathbb C^{n+1}$,  then $W_L := \iota^* W$ is a vector space of $\Omega^1(\mathbb C^n,0)$
satisfying $\dim W_L = \dim W -1$ and $\rank(W_L) = \rank(W) -1$. The induced rational map
\[
\iota^*: \mathbb P( W ) \dashrightarrow \mathbb P ( W_L )
\]
is nothing more than the linear projection centered at $[\omega_0]$. Notice that $\mathrm{I}_{W_L}$ is contained
in the image of $\IW$.

Suppose  $S(a,b)$ is an irreducible component of  $\IW$ and that $[\omega]$ belongs either to $C_a$ or $C_b$ in $S(a,b)$. The projection
of $S(a,b)$ centered at a point in $C_a$, resp. $C_b$, is clearly $S(a-1,b)$, resp. $S(a,b-1)$. By induction hypothesis
$\mathrm{I}_{W_L}$ must coincide with $\mathbb P(W_L)$. Since $L$ is arbitrary, this implies that for every $\alpha \in W$ the
$4$-form $\omega \wedge \alpha \wedge d \alpha$ is identically zero.

Let $\omega_1, \ldots, \omega_4 \in W$ be four linearly independent $1$-forms with classes in $C_a \cup C_b$. The argument
above shows that for every $\alpha \in W$ and every $i \in \{ 1, 2, 3, 4\}$, the $4$-form $\alpha \wedge d \alpha \wedge \omega_i =0 $.
Thus $\alpha\wedge d \alpha = 0$ for any $\alpha \in W$. The proposition follows.
\end{proof}

\section{Examples}\label{S:examples}

\subsection{Left-invariant $1$-forms on Lie groups}
Let $G$ be a complex Lie group and $\mathfrak g$ be its Lie algebra.
The vector space of left-invariant $1$-forms on $G$ is naturally identified with $W=\mathfrak g^*$.
The classes of integrable $1$-forms in $\mathbb P W$ are in one to one correspondence with codimension one
Lie subalgebras of $\mathfrak g$.

For example, if  $\mathfrak{g}=\mathfrak{sl}(2,\mathbb{C})$ then the irreducible components of  $\mathrm{I}_{\mathfrak{g}
^{\ast}}\subseteq \mathbb{P}(\mathfrak{g}^{\ast})$ are easily described:
if $\alpha ,$ $\beta ,$ $\gamma $ is one basis  of $\mathfrak{g}^{\ast }$ satisfying $d\alpha =\alpha \wedge \beta ,$
$d\beta =\alpha \wedge \gamma ,$ $d\gamma =\beta \wedge \gamma $, then $\omega =x\alpha +y\beta + z\gamma \in \mathfrak{g}^{\ast}$ is
integrable if and only if
\begin{equation*}
(2xz-y^{2})\alpha \wedge \beta \wedge \gamma =0.
\end{equation*}
Thus $\mathrm{I}_{\mathfrak{g}^{\ast }}\subseteq \mathbb{P}(\mathfrak{g}^{\ast })$ has only one irreducible component which is a conic.

\smallskip

More generally, if $\mathfrak{g}$\ is any Lie algebra then main result of \cite{Hofmann}
implies that  the irreducible components of $\mathrm{I}_{\mathfrak{g}^{\ast }}\subseteq
\mathbb{P}(\mathfrak{g}^{\ast})$ are either linear subspaces or conics of
the type described above.

\subsection{Godbillon-Vey sequences}

Another natural source of rational curves of integrable $1$-forms
is the {\it development} of foliations with finite Godbillon-Vey sequence as studied in \cite{GV5}.  Given a meromorphic integrable $1$-form
on a projective manifold $X$ (or more generally pseudo-parallelizable compact manifold) there exists a sequence
of $1$-forms $(\omega_0, \omega_1, \ldots, \omega_k, \ldots)$ such that the {\it formal} $1$-form (defined on $X$ times a
formal neighborhood of the origin of $\mathbb C$)
\[
\Omega = dz + \sum_{i=0}^{\infty} \frac{z^i}{i!} \,  \omega_i \, ,
\]
is integrable and $\omega_0 = \omega$. A sequence with such properties is called a Godbillon-Vey sequence of
$\omega$, and $\Omega$ is a development of $\omega$. When this sequence is finite, {\it i.e.} $\omega_i=0$ for $i >  i_0$, the restriction of $\Omega$
to $\{ z = \text{const.} \}$ produces a rational normal curve of integrable $1$-forms in the vector space $W$ generated
by $\omega_0, \ldots, \omega_{i_0}$.

When $i_0 =2$ we are in a situation not essentially different from the example associate to $\mathrm{sl}(2, \mathbb C)$. In this
case the foliation induced by $\omega$ is transversely projective, and at neighborhood of a generic point of $X$ there is a
a map to $\mathrm{SL}(2,\mathbb C)$ such that the sequence $(\omega_0, \omega_1, \omega_2)$ is the pull-back of a sequence
of left-invariant $1$-forms on $\mathrm{SL}(2,\mathbb C)$.

When $i_0 > 2$, although one can obtain rational normal curves of degree equal to $\dim \mathbb P( W )$, no  example of this
kind fall under our hypothesis. Indeed, according to \cite[Lemma 2.3]{GV5},  $\omega_i\wedge \omega_j =0 $  for every $i,j \ge 2$.
In particular $\rank (W) \le 3$.

\subsection{Rational normal curves of arbitrary degree}
Fix an integer $n\ge 2$.  Set $\omega_0 = dx_0$ and, for $j$ ranging from $1$ to $n$, set
\[
 \omega_j = f_j dx_j \quad \text{ where } \quad f_{j}=(j+1) + j ( x_0 + \cdots + x_n ) \, .
\]
Consider the vector space $W \subset \Omega^{1}(\mathbb C^{n+1},0)$ generated by $\omega_0, \ldots, \omega_n$.
Clearly $\dim W = \rank (W) = n+1$.

Notice that $\omega_1, \ldots, \omega_n$ are all integrable $1$-forms. A computation shows that  $\omega_{n+1} = \sum \omega_i$   as well as $\omega _{n+2}=\sum_{i=0}^{n}%
\frac{1}{(i+2)}\omega _{i}$ are also integrable. Thus,  according to Proposition \ref{P:Chave},
the unique rational normal curve $C$ through $[\omega_0], \ldots, [\omega_{n+2}]$ is contained in
$\IW$. But, as another computation shows, the $1$-form $\sum_{i=0}^{n}(i+1)\omega _{i}$ is not integrable.
Hence Theorem \ref{T:2} implies that $C$ is an irreducible component of $\IW$.

\section{Gelfand-Zakharevich correspondence}\label{S:GZ}
Although concrete,  the previous example says nothing about the underlying geometry of rational normal curves
of integrable $1$-form. Here we are going to review a beautiful geometric construction from \cite[pages 79--80]{GZ2}, that puts
in correspondence  analytic equivalence classes of germs of holomorphic surfaces along  smooth rational curves endowed with a morphisms
to $\mathbb P^1$, and rational normal curves of integrable $1$-forms. Using this correspondence we will characterize when rational normal
curves are irreducible components of $\IW$ in terms of properties of the associated surface.

\subsection{From rational normal curves to surfaces}
Set $X$ equal to $(\mathbb C^{n+1},0)$. As above, $X$ should be thought as a sufficiently small
connected neighborhood of the origin. Let $W \subset \Omega^1(X)$
be a vector subspace of dimension and rank equal to $n+1$.
Suppose that all the non-zero $1$-forms in $W$ are non-singular at every point of $X$.

\medskip

Let  $\gamma : \mathbb P^1 \to \mathbb P(W)$ be a parametrization of a rational normal curve of
integrable $1$-forms in $\mathbb P(W)$. For any $\lambda \in \mathbb P^1$, let $\mathcal F_{\lambda}$
be the foliation associated to $\gamma(\lambda)$. Since the $1$-forms in $W$ have no singular
points, $\mathcal F_{\lambda}$ is a smooth foliation on $X$ and as such has leaf space naturally isomorphic
to $(\mathbb C,0)$. Considering the union of the leaf spaces of all the foliations $\mathcal F_{\lambda}$ with $\lambda$
varying in $\mathbb P^1$, one obtains a germ of complex surface $X^{(2)}$. To each point $x \in X$, there is
a smooth  rational  curve $C_x$ corresponding to the union of the leaves of the foliations $\mathcal F_{\lambda}$
through $x$. Let $C = C_0$ the curve corresponding to the origin $0 \in X$.
It is not hard to see that $C^2 = C \cdot C_x = n$: just take the point $x$ in the intersection
of  leaves of $\mathcal F_{\lambda_1}, \ldots, \mathcal F_{\lambda_n}$  through the origin.  Notice also that
$X^{(2)}$ comes endowed with a holomorphic map $\pi : X^{(2)} \to \mathbb P^1$ that associates to a leaf of
$\mathcal F_{\lambda}$ the point $\lambda \in \mathbb P^1$. Of course the restriction  $\pi_{|C}: C \to \mathbb P^1$
is an isomorphism.

\smallskip

If $\Gamma \subset X \times  X^{(2)}$ is the point-leaf correspondence, that is
\[
\Gamma = \left\{ ( x,L ) \in X \times X^{(2)} \, \big \vert \, x \in L \right\} \, ,
\]
and $\rho_1: \Gamma \to X$,  $\rho_2 : \Gamma \to  X^{(2)}$ are the natural projections then:
for any $\lambda \in \mathbb P^1$ and any leaf $L \subset X$ of $\mathcal F_{\lambda}$, $\rho_2 \rho_1^{-1}(L)$
is a point of $X^{(2)}$;  and
 for any section $\sigma: \mathbb P^1 \to X^{(2)}$, the intersection
\[
\bigcap_{\lambda \in \mathbb P^1} \rho_1\big( \rho_2^{-1} \big(\sigma(\lambda)\big)\big)
\]
is a point of $X$, see \cite[Theorem 2.2]{GZ2}.

The triple $(X^{(2)},C,\pi)$  will be called the Gelfand-Zakharevich triple associate to the rational normal curve
of integrable $1$-forms $\gamma(\mathbb P^1)$.  On the one hand the pair $(X^{(2)}, C)$, seen as a   germ of surface
along a rational curve modulo isomorphisms, does not depend on the parametrization of the rational normal curve.
On the other hand, the morphism $\pi$ does depend on the parametrization but its equivalence class modulo
composition on the left with automorphism of $\mathbb P^1$ does not. In other words, the linear system
that defines $\pi$ does not depend on the parametrization. Thus, it is fair to say that the Gelfand-Zakharevich
triple is canonically associated to the rational normal curve $\gamma(\mathbb P^1)$.

\subsection{From surfaces to rational normal curves} Start now with a triple
$(S, C, \pi)$, where $S$ is  a germ of smooth surface $S$ along a smooth rational
curve $C$ of self-intersection $n$ and endowed with a morphism $\pi : S \to \mathbb P^1$, and assume $\pi_{|C}  : C \to \mathbb P^1$ is a isomorphism.

Deformation theory tell us that the space of deformations $\mathcal X$ of $C$ is smooth and  $\mathcal X \simeq ( H^0(C, N_C), 0) \simeq (\mathbb C^{n+1}, 0)$.
To each $\lambda \in \mathbb P^1$, let $\mathcal F_{\lambda}$ be the foliation of $\mathcal X$ which has as leaves  deformations of $C$  intersecting
$\pi^{-1}(\lambda)$ at a fixed point. It is possible to show that there exists a vector space $W \subset \Omega^1(\mathcal X,0)$ of dimension and
rank equal to $n+1$, and a  family of foliations $\mathcal F_{\lambda}$  parametrized by
a rational normal curve of integrable $1$-forms contained in $\mathbb P(W)$, see \cite[Theorem 2.3]{GZ2}.
Hence, the two constructions just presented are inverse two each other modulo the respective natural equivalence relations.

\subsection{Rational normal curves as irreducible components of $\IW$}
Let now $W \subset \Omega^1(\mathbb C^{n+1},0)$ be a vector space of dimension
and rank equal to $n+1$, and  $C \subset \IW$ be a rational normal curve.
If $(S, C, \pi)$ is the Gelfand-Zakharevich triple associated to $C$ then
 $S$ has algebraic dimension\begin{footnote}{As
in the case of compact surfaces we are considering the algebraic dimension of $S$ as the  transcendence degree over
$\mathbb C$ of
its field of germs of meromorphic functions $\mathbb C(S)$.}\end{footnote} $a(S)$ equal to one or two.  Indeed, $a(S)$ it is at least one because the morphism $\pi: S \to \mathbb P^1$
induces a inclusion of $\mathbb C(\mathbb P^1)$ into $\mathbb C(S)$; and $a(S)$  is at most two
because $C^2 > 0$ what allow us to apply \cite[Th\'{e}or\`{e}me 6]{andreotti} or \cite[Theorem 6.7]{hartshorneCD}.

\begin{theorem}\label{T:3}
Assume $n\ge 2$.
The algebraic dimension of $S$ is two if and only if $\IW$ coincides with $\mathbb P(W)$.
\end{theorem}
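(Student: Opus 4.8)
The plan is to prove the two implications of Theorem \ref{T:3} separately, using the Gelfand--Zakharevich correspondence to translate the geometry of $\IW$ into properties of the associated surface $S$. Recall from the preamble that $a(S) \in \{1,2\}$, so the content of the statement is that $a(S)=2$ is equivalent to the \emph{maximal} possibility $\IW = \mathbb P(W)$, while $a(S)=1$ corresponds to $C$ being a \emph{proper} irreducible component of $\IW$. The key structural fact I would exploit is that meromorphic functions on $S$ that are constant along the fibres of $\pi$ correspond, via deformation theory, to additional integrable directions in $\mathbb P(W)$; more precisely, a second algebraically independent meromorphic function on $S$ should produce a whole family of rational normal curves sweeping out $\mathbb P(W)$.

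\medskip

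\textbf{The direction $a(S)=2 \implies \IW = \mathbb P(W)$.} If $a(S)=2$ then $\mathbb C(S)$ has transcendence degree two, so there is a meromorphic function $g$ on $S$ algebraically independent from $\pi^*\mathbb C(\mathbb P^1)$. The pair $(\pi, g) : S \dashrightarrow \mathbb P^1 \times \mathbb P^1$ (or more precisely the two pencils they define) gives a second morphism $\pi' : S \to \mathbb P^1$ whose generic fibres still meet $C$ transversally and for which $\pi'_{|C}$ is generically finite. The idea is that each morphism in the pencil spanned by $\pi$ and $g$ is a valid Gelfand--Zakharevich morphism on the \emph{same} germ $(S,C)$, and the correspondence of Section \ref{S:GZ} attaches to each such morphism a rational normal curve of integrable $1$-forms inside $\mathbb P(W)$. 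Since $(S,C)$ is fixed and only the morphism varies, all these rational normal curves live in the same $\mathbb P(W)$ of dimension $n$; as the morphism varies in a two-parameter family these curves sweep out a subvariety of $\mathbb P(W)$ of dimension strictly greater than one. By Theorem \ref{T:1} any irreducible component of $\IW$ of codimension at most $n-2$ is of minimal degree, and one then pushes, via Proposition \ref{P:util} and Lemma \ref{L:util}, any such higher-dimensional component of integrable forms up to its linear span; iterating, the only way a two-dimensional (or higher) family of integrable rational normal curves can exist is if $\IW$ is all of $\mathbb P(W)$.

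\medskip

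\textbf{The direction $\IW = \mathbb P(W) \implies a(S)=2$.} Conversely, suppose $\IW = \mathbb P(W)$, so \emph{every} $1$-form in $W$ is integrable. Then every linear pencil in $\mathbb P(W)$ is a pencil of integrable forms, and in particular one obtains many rational normal curves through the fixed curve $C$, each supplying, via the surface-to-curve construction, a distinct Gelfand--Zakharevich morphism $\pi'$ on $(S,C)$. Two such morphisms $\pi, \pi'$ that are not related by an automorphism of $\mathbb P^1$ are algebraically independent as elements of $\mathbb C(S)$: if they were dependent their fibres would coincide, forcing the two rational normal curves to agree, contrary to choice. Hence $\mathbb C(S)$ contains two algebraically independent meromorphic functions and $a(S) \ge 2$; combined with $a(S) \le 2$ this forces $a(S)=2$.

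\medskip

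The main obstacle I anticipate is the careful bookkeeping in the forward direction: one must verify that a second meromorphic function $g$ on $S$ genuinely yields a morphism compatible with the Gelfand--Zakharevich construction (in particular that its restriction to $C$ is an isomorphism, or can be arranged to be after normalizing), and that the resulting family of rational normal curves is genuinely of dimension at least two rather than collapsing onto a single curve. Equivalently, the delicate point is translating ``transcendence degree two'' into ``a positive-dimensional family of valid morphisms $(S,C,\pi')$,'' since a priori algebraic independence of functions on $S$ need not immediately produce the geometric pencil of fibrations the correspondence requires. Making this translation precise — presumably by working with the complete linear system $|C|$ on $S$ and showing $\dim |C| \ge 2$ exactly when $a(S)=2$ — is where the real work lies, and the self-intersection computation $C^2 = n$ together with $\dim H^0(C,N_C) = n+1$ from Section \ref{S:GZ} should be the quantitative input that controls it.
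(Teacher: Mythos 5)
Your overall strategy---turning extra transcendence of $\mathbb C(S)$ into extra fibrations of $(S,C)$, hence extra rational normal curves of integrable forms, and conversely---is natural, but both implications as written have a genuine gap at the same point: the Gelfand--Zakharevich correspondence is between rational normal curves in $\mathbb P(W)$ and \emph{triples} $(S,C,\pi)$ with $\pi_{|C}$ an isomorphism, and you never establish that the extra data you produce is of this type, nor that it lives in the \emph{same} $W$. In the direction $a(S)=2\Rightarrow \IW=\mathbb P(W)$, a second meromorphic function $g$ algebraically independent of $\pi$ gives a second map $\pi'$ whose fibres have no reason to meet $C$ (and its deformations $C_x$) with multiplicity one; if $C\cdot \pi'^{-1}(\lambda)=d>1$, the induced partition of $X$ by ``sections having the same $d$-point intersection with $\pi'^{-1}(\lambda)$'' is not a codimension-one foliation, let alone one defined by a $1$-form in $W$. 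You flag this yourself, but the proposed fix (that $a(S)=2$ is equivalent to $\dim|C|\ge 2$, or that a positive-dimensional family of valid fibrations exists) is precisely where all the work lies and is not carried out. In the converse direction, a different rational normal curve $\gamma'\subset \IW$ produces, via the correspondence, a priori a \emph{different} surface $S'$ (the union of the leaf spaces of the foliations parametrized by $\gamma'$), so you do not automatically obtain two independent fibrations on one and the same $S$; identifying $S'$ with $S$ is not addressed, and without it the argument ``two independent functions on $S$'' does not get off the ground.

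The paper's proof sidesteps both issues by identifying the surface explicitly. For $\IW=\mathbb P(W)\Rightarrow a(S)=2$ it reuses the computation in the proof of Lemma \ref{L:lisa}: if every form in $W$ is integrable then in suitable coordinates $W=\langle h\,dx_0,\dots,h\,dx_n\rangle$, the leaf spaces are computed directly, and $(S,C,\pi)\simeq (E(\mathcal O_{\mathbb P^1}(n)),C_0,\pi)$ is algebraic, so $a(S)=2$. For the converse it compactifies $S$ inside a projective surface $Z$ (via Andreotti/Hartshorne), extends $\pi$, shows $Z$ is rational with rational fibres, contracts to the Hirzebruch surface $\mathbb P(\mathcal O_{\mathbb P^1}(n)\oplus\mathcal O_{\mathbb P^1})$, and concludes that $(S,C,\pi)$ embeds into the model triple associated to $W=\oplus\mathbb C\,dx_i$; naturality of the correspondence then forces $\IW=\mathbb P(W)$. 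To salvage your route you would need, at a minimum, to prove that $a(S)=2$ yields a second fibration whose fibres meet $C$ transversally in one point (not merely a second function), and that the correspondence applied to any such fibration on a fixed $(S,C)$ lands in the same subspace $W\subset\Omega^1(\mathcal X,0)$; neither is immediate, and both are essentially equivalent to the compactification argument the paper actually performs.
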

\begin{proof}
If $\IW$ coincides with $\mathbb P(W)$ then the same arguments used to  prove Lemma \ref{L:lisa} imply
that $W$ is in a suitable system of coordinates the vector space generated by $h dx_0, \ldots, hdx_n$
for a fixed meromorphic function $h$.  In these coordinates, the foliations induced by elements of $W$
globalize to smooth foliations on $\mathbb C^{n+1}$. The leaf space of each one of these foliations
is isomorphic to $\mathbb C$, and the Gelfank-Zakharevich triple is isomorphic to $(E(\mathcal O_{\mathbb P^1}(n)), C_0, \pi)$
where $E(\mathcal O_{\mathbb P^1}(n))$ is the total space of $\mathcal O_{\mathbb P^1}(n)$, $C_0$ is the zero section, and
$\pi:E(\mathcal O_{\mathbb P^1}(n)) \to \mathbb P^1$ is the natural projection. Since $E(\mathcal O_{\mathbb P^1}(n))$ is an algebraic surface its  algebraic dimension
is at least two. Thus $\IW = \mathbb P(W)$ implies $a(S) = 2$.

Suppose now that $a(S)=2$. Therefore, there exists a projective surface $Z$ containing $S$ as an open subset. Moreover,
if $i: S \to Z$ is the inclusion then the Theorems of Andreotti and Hartshorne refereed to above imply that the induced
morphism $i^* : \mathbb C(Z) \to \mathbb C(S)$ is surjective. Thus the morphism $\pi: S \to \mathbb P^1$ extends
to a rational map, still denoted by $\pi$,   $\pi: Z  \dashrightarrow \mathbb P^1$. Since its indeterminacies, if any, are away from $C$,
it is harmless to assume that $\pi$ is indeed a regular map defined on all of $Z$.

We claim that the surface $Z$ is a rational surface and that the fibers of $\pi$ are rational curves.
The arguments are essentially the same as the ones laid down in \cite[Section 5.4.3]{PP} which we refer
for further details.  First notice that the abundance of rational curves on $Z$ implies that there are
no holomorphic $1$-forms on it. Hence linear and algebraic equivalence coincide thanks to Hodge theory.
After blowing-up  $Z$ at $n$ distinct points of $C$, one obtains a fibered surface $\overline  \pi : \overline Z \to \mathbb P^1$ containing
a section $\overline C$ of self-intersection zero which moves in a linear system of projective dimension one. This suffices
to show that fibers of $\overline \pi$, and hence also the fibers of $\pi$, are rational curves. Successive contractions of
the $(-1)$-curves on the fibers of $\pi$ that do not intersect the curve $C$ lead us to a relative minimal model $Z_0$ of
$Z$ which has to be the Hirzebruch surface $\mathbb P ( \mathcal O_{\mathbb P^1}(n) \oplus \mathcal O_{\mathbb P^1})$.
The complement of the section of self-intersection $-n$ is isomorphic to  $E(\mathcal O_{\mathbb P^1}(n))$ with the
  curve $C$ identified with $C_0$. Thus we conclude that the Gelfand-Zakharevich triple $(S,C,\pi)$ extends to the
  triple  $(E(\mathcal O_{\mathbb P^1}(n)), C_0, \pi)$ associate to $W = \oplus_{i=0}^n \mathbb C dx_i$. The naturalness
  of Gelfand-Zakharevich correspondence implies the result.
\end{proof}

\begin{cor}\label{C:3}Assume $n\ge 2$.
The curve $C$ is an irreducible component of $\IW$ if and only if the algebraic dimension of $S$ is one.
\end{cor}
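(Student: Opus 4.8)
The plan is to derive the corollary directly from Theorem \ref{T:3} and Theorem \ref{T:2}, with no new computation required. Recall from the discussion preceding Theorem \ref{T:3} that $a(S)$ equals one or two. Theorem \ref{T:3} asserts that $a(S)=2$ if and only if $\IW=\mathbb P(W)$; combining these two facts yields the equivalence $a(S)=1 \iff \IW \neq \mathbb P(W)$. Thus it suffices to prove that $C$ is an irreducible component of $\IW$ if and only if $\IW \neq \mathbb P(W)$.

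For the forward implication I would argue by contraposition. If $\IW = \mathbb P(W)$, then $\IW$ is irreducible and its unique irreducible component is $\mathbb P(W)$ itself, which has dimension $n \geq 2$. A rational normal curve $C$ has dimension one, so it cannot coincide with this component; hence $C$ is not an irreducible component of $\IW$. This shows that $C$ being an irreducible component forces $\IW \neq \mathbb P(W)$, that is, $a(S)=1$.

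For the converse, assume $\IW \neq \mathbb P(W)$ and let $\Sigma$ be an irreducible component of $\IW$ containing $C$ (one exists since $C$ is irreducible). Since $\dim W = \rank(W)$, Theorem \ref{T:2} tells us that $\Sigma$ is either a linear subspace or a rational normal curve in its linear span. Here I would use that $C$, being a rational normal curve in $\mathbb P(W)=\mathbb P^n$, is non-degenerate and therefore spans all of $\mathbb P(W)$. Consequently, if $\Sigma$ were a linear subspace it would contain the linear span of $C$, namely $\mathbb P(W)$, forcing $\IW=\mathbb P(W)$ and contradicting our assumption. Hence $\Sigma$ is a rational normal curve; since $C \subseteq \Sigma$ with both irreducible of dimension one, we conclude $C = \Sigma$, so that $C$ is indeed an irreducible component of $\IW$.

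The argument is short and presents no serious obstacle. The only point demanding care is the non-degeneracy of $C$: it is exactly this that prevents a proper linear component from absorbing the curve, and it is the hinge of the converse direction.
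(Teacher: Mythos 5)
Your argument is correct and is exactly the derivation the paper intends: the corollary is stated without proof as an immediate consequence of the dichotomy $a(S)\in\{1,2\}$, Theorem \ref{T:3}, and Theorem \ref{T:2}, which is precisely the chain you use (and the same reasoning the paper itself invokes in Section \ref{S:examples} to conclude that a given rational normal curve is a component). Your observation that the non-degeneracy of $C$ in $\mathbb P(W)$ is the hinge of the converse is well placed; nothing is missing.
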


\medskip
When $n=1$ all the elements of $\mathbb P(W)=\mathbb P^1$ correspond to  integrable $1$-forms.
Neverthless, there  is  a natural analogue of Theorem \ref{T:3} in this case. It   reads as:
$a(S)=2$ if and only if the there exists a closed meromorphic  $1$-form $\eta$  such that $d\omega = \eta \wedge \omega$
for every $\omega \in W$. The reader can easily infer such result from the proof of Theorem \ref{T:3}. Notice that
only the first paragraph has to be adapted,  the remaining  of the proof works as it is.

\begin{center}{\S}\end{center}

\bigskip

It would be interesting to investigate if, and if yes how, the Gelfand-Zakharevich correspondence globalizes when studying
rational normal curves of foliations on compact complex manifolds. For instance a structure theorem
for foliations in these curves along the lines of  \cite{Cerveaupencil} would be a welcome addition to the   literature.

\end{document}